\numberwithin{equation}{section}
\newtheorem{thm}{Theorem}[section]
\newtheorem{prop}[thm]{Proposition}
\theoremstyle{remark}
\theoremstyle{definition}
\newcommand{\ore}{\overrightarrow{e}}
\def\coef#1{\left\langle#1\right\rangle}
\def\FC{\operatorname{FC}}
\def\fc{\operatorname{fc}}
\date{\dateline{Jan 13, 2011}{Jul 12, 2011}{July 22, 2011}\\
   \small 2010 Mathematics Subject Classification: 
Primary 05A15; Secondary 05A19}
\begin{document}

\newbox\Adr
\setbox\Adr\vbox{\hsize150mm
\centerline{$^\dagger$ Universit\'e Grenoble I, CNRS UMR 5582, 
Institut Fourier,}
\centerline{100 rue de maths, BP 74, 
F-38402 St.\ Martin d'H\`eres Cedex, France.}
\centerline{\footnotesize WWW: 
\tt http://www-fourier.ujf-grenoble.fr/\lower0.5ex\hbox{\~{}}bacher}
\vskip18pt
\centerline{$^*$ Fakult\"at f\"ur Mathematik, Universit\"at Wien,}
\centerline{Nordbergstra\ss e 15, A-1090 Vienna, Austria.}
\centerline{\footnotesize WWW: \footnotesize\tt
http://www.mat.univie.ac.at/\lower0.5ex\hbox{\~{}}kratt} 
}

\title{Chromatic statistics for triangulations and Fu{\ss}--Catalan complexes}

\author
{
\sc R. Bacher$^\dagger$ and C. Krattenthaler\protect\footnote{%
Research partially supported by the Austrian
Science Foundation FWF, grants Z130-N13 and S9607-N13,
the latter in the framework of the National Research Network
``Analytic Combinatorics and Probabilistic Number Theory."\newline
\indent{\it Key words and phrases}.
Catalan number, Fu\ss--Catalan number, triangulation,
Fu\ss--Catalan complex, barycentric subdivision, 
Schlegel diagram,
vertex colouring, simplicial complex, Lagrange--Good inversion formula.}
\\[18pt]
\box\Adr
}

\maketitle

\begin{abstract}
We introduce Fu\ss--Catalan complexes as $d$-dimensional generalisations of
triangulations of a convex polygon. These complexes are used to
refine Catalan numbers and Fu{\ss}--Catalan numbers, by introducing
colour statistics for triangulations 
and Fu{\ss}--Catalan complexes. 
Our refinements consist in showing that the number of triangulations,
respectively of Fu{\ss}--Catalan complexes, with a given colour distribution
of its vertices is given by closed product formulae.
The crucial ingredient in the proof is the Lagrange--Good inversion 
formula.
\end{abstract}

\newpage

\section{Introduction}

\subsection{Catalan and Fu{\ss}--Catalan numbers}

The sequence $(C_n)_{n\ge0}$ of Catalan numbers
$$1,1,2,5,14,42,132,429,1430,4862,16796,58786,\dots,$$
see \cite[sequence A108]{OEIS}, defined by 
\begin{equation} \label{eq:Cat} 
C_n:=\frac {1} {n+1}\binom {2n}n=\frac {1} {n}\binom {2n}{n-1},
\end{equation}
is ubiquitous in enumerative combinatorics.
Exercise~6.19 in \cite{StanBI} contains a list of 
66~sequences of sets enumerated by Catalan numbers,
with many more in the addendum \cite{Stadd}.
In particular, there are $\frac {1} {n+1}\binom {2n}n$
triangulations of a convex polygon\footnote{As is common, when we
speak of a ``convex polygon," we always tacitly assume that all its
angles are less than 180 degrees.} with $n+2$ vertices
(see \cite[Ex.~6.19.a]{StanBI}).

Even many years before Catalan's paper \cite{CataAA}, Fu{\ss} \cite{FussAA}
enumerated the dissections of a convex $((d-1)n+2)$-gon into
$(d+1)$-gons (obviously, any such dissection will consist of $n$
$(d+1)$-gons) and found that there are
\begin{equation} \label{eq:FC} 
\frac {1} {n}\binom {dn}{n-1}
\end{equation}
of those. These numbers are now commonly known as {\it
Fu\ss--Catalan numbers} (cf.\ \cite[pp.~59--60]{ArmDAA}).

Dissections of convex polygons into $(d+1)$-gons have been studied frequently
in the literature (see \cite{PrSiAA} for a survey). Moreover,
they have been recently embedded 
into a reflection group framework
in a very non-obvious way by Fomin and Reading \cite{FoReAA},
thereby extending earlier work of Fomin and Zelevinsky \cite{FOZeAB}.
For further combinatorial occurrences of the Fu{\ss}--Catalan
numbers, the reader is referred to \cite[paragraph after
(8.9)]{FoReAA}.

In the present paper, we propose a combinatorial
interpretation of Fu\ss--Catalan numbers which, to the best of our
knowledge, has not been considered before.
Nevertheless it is, in some sense, 
perhaps a (geometrically) more natural generalisation of triangulations
of a convex polygon (even if more difficult to visualise). Namely, we
consider $d$-dimensional simplicial complexes on $n+d$ vertices
homeomorphic to a $d$-ball that
consist of $n$ maximal simplices all of dimension $d$, with the additional
property that all simplices of dimension up to $d-2$ lie in the boundary
of the complex. (See Section~\ref{sectFussCatalan} for the
precise definition.) We call these complexes {\it
Fu{\ss}--Catalan complexes}. It is not difficult to see (cf.\
Section~\ref{sec:Def}) that the number of these complexes is indeed
given by the Fu\ss--Catalan number \eqref{eq:FC}.

We hope to provide sufficient evidence here 
that Fu\ss--Catalan complexes are generalisations of
triangulations which are equally attractive as 
dissections of convex polygons by $(d+1)$-gons. 
Some elementary properties of Fu\ss--Catalan complexes are
listed in Section~\ref{sec:elem}. Our main results 
(Theorems~\ref{thmcoeffabc},
\ref{thmCxxy}, \ref{thmgeneralcase}, and \ref{thmspec}) present
refinements of the plain enumeration of triangulations and
Fu\ss--Catalan complexes arising from certain vertex-colourings
of triangulations and Fu\ss--Catalan complexes, respectively.
It seems that these are intrinsic to Fu\ss--Catalan complexes;
in particular, 
we are not aware of any natural analogues of these results
for polygon dissections (except for the case of triangulations).

\subsection{Coloured refinements: short outline of this paper}

To each triangulation,
respectively, more generally, Fu{\ss}--Catalan complex,
we shall associate a colouring of its vertices. In a certain sense,
this colouring measures whether or not a large number of triangles
(respectively maximal simplices) meets in single vertices. 
We show that the number of triangulations of a convex $(n+2)$-gon
(respectively of $d$-dimensional Fu{\ss}--Catalan complexes on $n+d$
vertices) with a fixed distribution of colours of its
vertices is given by closed formulae (see Theorems~\ref{thmcoeffabc},
\ref{thmCxxy}, \ref{thmgeneralcase}, and \ref{thmspec}),
thus refining the Catalan numbers \eqref{eq:Cat} (respectively
the Fu{\ss}--Catalan numbers \eqref{eq:FC}).

In order to give a clearer idea of what we have in mind, we
shall use the remainder of this introduction to define precisely
the colouring scheme for the case of triangulations, and we shall
present the corresponding refined enumeration results (see
Theorems~\ref{thmcoeffabc} and \ref{thmCxxy}). 
Subsequently, in Section~\ref{sectFussCatalan} we generalise this
setting by introducing $d$-dimensional Fu{\ss}--Catalan complexes
for arbitrary positive integers $d$. The corresponding enumeration
results generalising Theorems~\ref{thmcoeffabc} and \ref{thmCxxy} 
are presented in Theorems~\ref{thmgeneralcase} and \ref{thmspec}. 
Section~\ref{sectproofgeneral} is then devoted to the proof of
Theorem~\ref{thmgeneralcase}, thus also establishing Theorem~\ref{thmcoeffabc}.
Crucial in this proof is the Lagrange--Good inversion formula
\cite{GoodAA}. Finally,
Section~\ref{sectspec} is devoted to the proof of Theorem~\ref{thmspec},
and thus also of Theorem~\ref{thmCxxy}, which it generalises.

\subsection{$3$-Coloured triangulations}

In the sequel, $P_n$ stands for a convex 
polygon with $n$ vertices. 
Since we are only interested 
in the combinatorics of triangulations of $P_{n+2}$,
we can consider a unique polygon $P_{n+2}$ for each integer $n\geq 0$.
A triangulation of $P_{n+2}$ has exactly $n$ triangles.
We shall always use the Greek letter $\tau$ to denote triangulations.
We call a triangulation $\tau$ of $P_{n+2}$ 
{\it $3$-coloured\/} if the $n+2$ vertices of $P_{n+2}$ are coloured 
with $3$ colours in such a way that the three vertices of every triangle 
in $\tau$ have different colours. (Using a graph-theoretic term,
we call a colouring with the latter property a {\it proper}
colouring.)
An easy induction on $n$ shows the existence of such a colouring,
and that it is unique up to permutations of all three colours. 

A {\it rooted\/} polygon is, by definition, a (convex) polygon containing 
a marked oriented edge $\ore$, the ``root edge"
(borrowing terminology from the theory of combinatorial maps;
cf.\ \cite{TuttAA}) in its boundary. In the illustrations
in Figure~\ref{fig1},
the marked oriented edge is always indicated by an arrow.
We write $P_{n+2}^\to$ for a rooted polygon with $n+2$ vertices.
In the sequel, we omit a separate discussion
of the degenerate case $n=0$,
where the rooted ``polygon" $P_2^\to$ essentially only consists of
the marked oriented edge $\ore$. We agree once and for all
that there is one triangulation in this case.

For $n\geq 1$,
a triangulation $\tau$ of $P_{n+2}^\to$ has a unique triangle
$\Delta_*$ that contains the marked oriented edge $\ore$. We consider
this ``root triangle'' as a triangle
with totally ordered vertices $v_0<v_1<v_2$, where $\ore$ starts at
$v_1$ and ends at $v_2$.
The $n+2$ vertices of a triangulation $\tau$ of $P_{n+2}^\to$ 
can then be uniquely coloured with three colours 
$\{\text{{\tt a},{\tt b},{\tt c}}\}$ such that 
$\ore$ starts at a vertex of colour {\tt b}, ends at a vertex
of colour {\tt c}, and vertices of every 
triangle $\Delta\in\tau$ have
different colours. 
Figure~\ref{fig1} shows all such $3$-coloured triangulations
of $P_{n+2}^\to$ for $n=0,1,2,3$.

\begin{figure}[h]
\epsfysize=5cm
\centerline{\epsfbox{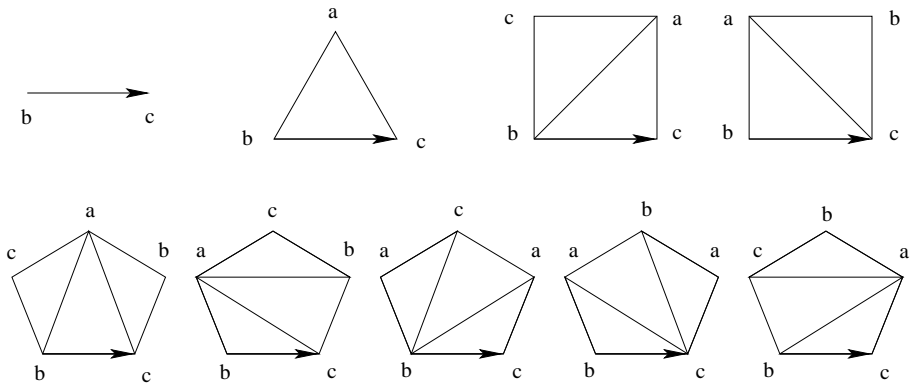}}
\caption{All $3$-coloured triangulations for $n=0,1,2,3$.}
\label{fig1}
\end{figure}

Our first result provides a closed formula for the
number of triangulations with a fixed colour distribution of its
vertices.

\begin{thm} \label{thmcoeffabc} 
Let $n$ be a non-negative integer and $\alpha, \beta, \gamma$
non-negative integers with $\alpha+\beta+\gamma=n+2$. Then
the number of triangulations of the rooted polygon $P_{n+2}^\to$
with $\alpha$ vertices of colour {\tt a},
$\beta$ vertices of colour {\tt b},
and $\gamma$ vertices of colour {\tt c} in the uniquely determined
colouring induced by a triangulation, in which
the starting vertex of the marked oriented edge $\ore$ has
colour {\tt b}, its ending vertex has colour {\tt c}, and
the three vertices in each triangle have different colours, 
is equal to
\begin{equation}\label{formulad=2}
\frac {\alpha(\alpha+\beta+\gamma-2)} 
{(\beta+\gamma-1){(\alpha+\gamma-1)}{(\alpha+\beta-1)}}
{\binom {\beta+\gamma-1} { \alpha}}
{{\binom {\alpha+\gamma-1} {\beta-1}}}
{{\binom {\alpha+\beta-1} { \gamma-1}}}\ .
\end{equation}
In the case where $\alpha=0$, this has to be
interpreted as the limit $\alpha\to0$, that is, it is $1$ if 
$(\alpha,\beta,\gamma)=(0,1,1)$ and $0$ otherwise.
\end{thm}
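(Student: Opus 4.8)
The plan is to encode $3$-coloured rooted triangulations by a multivariate generating function and to extract the desired coefficient via the Lagrange--Good inversion formula. I introduce variables $x_a,x_b,x_c$ marking the numbers of vertices of the three colours, and weight each rooted triangulation by the product of the colour variables over all of its vertices \emph{except} the two endpoints of the root edge $\ore$. Let $A=A(x_a,x_b,x_c)$ be the resulting generating function for triangulations whose root edge runs from colour $\mathtt b$ to colour $\mathtt c$ (so that the apex of the root triangle $\Delta_*$ necessarily has colour $\mathtt a$), and let $B,C$ be the analogous series for root edges of colour type $(\mathtt c,\mathtt a)$ and $(\mathtt a,\mathtt b)$. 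Removing $\Delta_*$ splits the triangulation along its two non-root edges into two rooted triangulations of colour type $(\mathtt c,\mathtt a)$ and $(\mathtt a,\mathtt b)$; since the apex is the unique vertex weighted by $A$ but by neither of the two sub-series, tracking colours cyclically gives
\begin{equation*}
A=1+x_a\,BC,\qquad B=1+x_b\,CA,\qquad C=1+x_c\,AB,
\end{equation*}
where the summand $1$ records the degenerate case $n=0$.

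Because the two omitted root vertices carry the colours $\mathtt b$ and $\mathtt c$, the quantity sought in the theorem is exactly $[x_a^{\alpha}x_b^{\beta-1}x_c^{\gamma-1}]\,A$. Setting $z_1=A-1,\ z_2=B-1,\ z_3=C-1$ turns the system into the compositional form required by Lagrange--Good inversion,
\begin{equation*}
z_1=x_a(1+z_2)(1+z_3),\qquad z_2=x_b(1+z_3)(1+z_1),\qquad z_3=x_c(1+z_1)(1+z_2),
\end{equation*}
that is, $z_i=x_i\,\phi_i(\mathbf z)$ with $\phi_i=\prod_{j\ne i}(1+z_j)$ and $\phi_i(\mathbf 0)=1$. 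For $(\alpha,\beta,\gamma)\ne(0,1,1)$ the target coefficient is nonconstant, so it equals $[x_a^{\alpha}x_b^{\beta-1}x_c^{\gamma-1}]\,z_1$; the exceptional triple $(0,1,1)$ is the constant term $A(\mathbf 0)=1$, matching the limit prescribed in the statement.

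Applying the Lagrange--Good formula \cite{GoodAA} with $\Psi(\mathbf z)=z_1$ and exponent vector $(\alpha,\beta-1,\gamma-1)$ yields
\begin{equation*}
[x_a^{\alpha}x_b^{\beta-1}x_c^{\gamma-1}]\,z_1=[z_1^{\alpha}z_2^{\beta-1}z_3^{\gamma-1}]\Bigl(z_1\,\phi_1^{\alpha}\phi_2^{\beta-1}\phi_3^{\gamma-1}\,J\Bigr),\qquad J=\det\left(\delta_{ij}-\frac{z_i}{\phi_i}\frac{\partial\phi_i}{\partial z_j}\right).
\end{equation*}
Since $\phi_i=\prod_{j\ne i}(1+z_j)$, the product of powers collapses to $(1+z_1)^{\beta+\gamma-2}(1+z_2)^{\alpha+\gamma-1}(1+z_3)^{\alpha+\beta-1}$, while the matrix has diagonal entries $1$ and off-diagonal entries $-z_i/(1+z_j)$, so that a direct $3\times3$ evaluation gives
\begin{equation*}
J=\frac{1+z_1+z_2+z_3-4z_1z_2z_3}{(1+z_1)(1+z_2)(1+z_3)}.
\end{equation*}
Substituting this, cancelling one factor of each $(1+z_i)$, and absorbing the leading $z_1$ reduces everything to the purely algebraic task of computing
\begin{equation*}
[z_1^{\alpha-1}z_2^{\beta-1}z_3^{\gamma-1}]\Bigl((1+z_1)^{\beta+\gamma-3}(1+z_2)^{\alpha+\gamma-2}(1+z_3)^{\alpha+\beta-2}\bigl(1+z_1+z_2+z_3-4z_1z_2z_3\bigr)\Bigr).
\end{equation*}

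Expanding the final factor over the three binomial series produces a five-term sum of products $\binom{\beta+\gamma-3}{\,\cdot\,}\binom{\alpha+\gamma-2}{\,\cdot\,}\binom{\alpha+\beta-2}{\,\cdot\,}$, in which the monomials $1$, $z_1$, $z_2$, $z_3$, $-4z_1z_2z_3$ shift the respective lower indices down by $0$ or $1$, the last term carrying the factor $-4$. The only genuine obstacle is to show that this alternating sum collapses to the single product \eqref{formulad=2}. I would carry this out by extracting a common product of three binomial coefficients and rewriting each of the five terms as that product times a rational function of $\alpha,\beta,\gamma$, using elementary relations such as $\binom{p}{k-1}=\binom{p}{k}\,\frac{k}{p-k+1}$; the bracketed rational expression then simplifies to $\frac{\alpha(\alpha+\beta+\gamma-2)}{(\beta+\gamma-1)(\alpha+\gamma-1)(\alpha+\beta-1)}$. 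A final check of the boundary cases, where an upper index is negative or a lower index falls below $0$, confirms that the formula, read with the stated convention at $\alpha=0$, holds for all admissible $(\alpha,\beta,\gamma)$.
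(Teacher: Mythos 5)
Your proposal is correct and follows essentially the same route as the paper: the paper proves the more general Theorem~\ref{thmgeneralcase} by exactly this scheme --- the root-simplex decomposition giving the system $g_i=x_i\prod_{j\ne i}(1+g_j)$, Lagrange--Good inversion, and the evaluation of the Jacobian determinant (Proposition~\ref{propdetjac}, whose $d=2$ case is your $1+z_1+z_2+z_3-4z_1z_2z_3$) --- and Theorem~\ref{thmcoeffabc} is then the specialisation $d=2$. The only step you leave unexecuted, collapsing the five-term binomial sum to the product formula \eqref{formulad=2}, is carried out in the paper (for general $d$) and is indeed the routine manipulation you describe, so there is no genuine gap.
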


As we already announced, we shall generalise this theorem in
Theorem~\ref{thmgeneralcase} from triangulations 
to simplicial complexes. Its proof
(given in Section~\ref{sectproofgeneral}) shows that the corresponding
generating function, that is, the series
$$
C=C(a,b,c)=\sum_{\alpha,\beta,\gamma\ge0}C_{\alpha,\beta,\gamma}
a^\alpha b^\beta c^\gamma,
$$
where $C_{\alpha,\beta,\gamma}$ is the number of triangulations
in Theorem~\ref{thmcoeffabc},
is algebraic. To be precise, from the equations given in 
Section~\ref{sectproofgeneral} (specialised to $d=2$), one can extract that
\begin{equation} \label{eq:algGl} 
(bc)^3(1+a)+(bc)^2((b+c)a-1)C+(bc)^2(a-2)C^2+2bcC^3+bcC^4-C^5=0\ .
\end{equation}

\medskip
Next we identify two of the three colours. In other words,
we now consider {\it improper} colourings of triangulations of $P_{n+2}^\to$ 
by {\it two} colours, say black and white, 
such that every triangle has 
exactly one black vertex and two white vertices. 
There are then two possibilities to colour the marked oriented edge
$\ore$: either both of its incident vertices are coloured
white, or one is coloured white and the other black
(for the purpose of enumeration, it does not matter which of the 
two is white respectively black in the latter case). 
Remarkably, in both cases there exist again closed enumeration
formulae for the number of triangulations with a given colour
distribution.

\begin{thm} \label{thmCxxy} 
Let $n,b,w$ be non-negative integers with $b+w=n+2$.

\smallskip
{\em(i)} 
The number of triangulations of the rooted polygon $P_{n+2}^\to$
with $b$ black vertices and $w$ white vertices
in the uniquely determined
colouring induced by a triangulation, in which
both vertices of the marked oriented edge $\ore$ are
coloured white, and, in each triangle, exactly two of
the three vertices are coloured white,
is equal to
$$\frac {2b} {(w-1)(2b+w-2)}{\binom {2b+w-2} { w-2}}
{\binom {w-1} { b}}\ .$$

\smallskip
{\em(ii)} 
The number of triangulations of the rooted polygon $P_{n+2}^\to$
with $b$ black vertices and $w$ white vertices 
in the uniquely determined
colouring induced by a triangulation, in which
the starting vertex of the marked oriented edge $\ore$ is
coloured white, its ending vertex is coloured black,
and, in each triangle, exactly two of
the three vertices are coloured white,
is equal to
$$
\frac {1} {2b+w-2}
{\binom {2b+w-2} { w-1}}{\binom {w-1} { b-1}}\ .$$
\end{thm}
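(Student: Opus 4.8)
The plan is to deduce Theorem~\ref{thmCxxy} from the three-colour refinement of Theorem~\ref{thmcoeffabc} by merging two colours and summing. The first task is to identify, in each of the two cases, which proper three-colouring collapses to the required improper two-colouring. Recall that in Theorem~\ref{thmcoeffabc} the unique proper three-colouring has the starting vertex of $\ore$ coloured {\tt b} and its ending vertex coloured {\tt c}, and every triangle carries exactly one vertex of each colour; hence, declaring any one colour class to be ``black'' and the union of the other two to be ``white'' yields an improper two-colouring with exactly one black and two white vertices per triangle. For case~(ii) (start white, end black) the ending vertex, of colour {\tt c}, must be black, which forces black $=${\tt c} and white $=\{\text{\tt a},\text{\tt b}\}$; for case~(i) (both endpoints white) neither {\tt b} nor {\tt c} may be black, forcing black $=${\tt a} and white $=\{\text{\tt b},\text{\tt c}\}$. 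By the uniqueness of the induced colouring asserted in Theorem~\ref{thmCxxy}, these merges produce exactly the two-colourings in question.

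Consequently the two counts are diagonal sums of the numbers $C_{\alpha,\beta,\gamma}$ of Theorem~\ref{thmcoeffabc}. Writing $b,w$ for the numbers of black and white vertices, case~(ii) asks for
\begin{equation*}
\sum_{\alpha+\beta=w} C_{\alpha,\beta,b},
\end{equation*}
and case~(i) for $\sum_{\beta+\gamma=w} C_{b,\beta,\gamma}$. Substituting the closed form \eqref{formulad=2} into either sum, I would first extract the one factor that is free of the summation index: in case~(ii) this is $\binom{w-1}{b-1}$, and in case~(i) it is $\tfrac{b}{w-1}\binom{w-1}{b}$. In each case the extracted factor already appears on the right-hand side of the target formula, so cancelling it reduces the identity to a single-variable summation.

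After this cancellation, case~(ii) amounts to showing
\begin{equation*}
\sum_{\alpha\ge1}\frac{\alpha}{(w+b-\alpha-1)(\alpha+b-1)}\binom{w+b-\alpha-1}{\alpha}\binom{\alpha+b-1}{w-\alpha-1}=\frac{w-1}{(w+b-2)(2b+w-2)}\binom{2b+w-2}{w-1},
\end{equation*}
and case~(i) reduces to a structurally identical sum with shifted parameters. I would evaluate these terminating sums in closed form. The summands are products of two Fu{\ss}--Catalan- or Raney-type coefficients, so each sum is a Raney-type convolution; the plan is to match it to a classical summation theorem (a Rothe--Hagen/Vandermonde convolution, or Pfaff--Saalsch\"utz after rewriting the summand as a ${}_3F_2$). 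Should no clean match present itself, Zeilberger's algorithm (creative telescoping) will yield a first-order recurrence in $w$ whose solution, pinned down by one initial value, is exactly the stated right-hand side.

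The main obstacle will be this final summation: recognising the convolution as a known closed-form evaluation, or else producing and verifying the telescoping certificate. A secondary technical point is the careful treatment of boundary and degenerate cases --- small values such as $w\le1$ or $b=0$, summation indices where a binomial or a denominator vanishes, and the $\alpha\to0$ limiting convention of Theorem~\ref{thmcoeffabc} --- which must be handled so that the extraction of the common factor and the chosen summation range are legitimate.
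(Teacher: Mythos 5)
Your proposal is correct and follows essentially the same route as the paper: the paper obtains Theorem~\ref{thmCxxy} as the case $d=2$, $k=1$ of Theorem~\ref{thmspec}, whose proof is exactly a colour-merging argument in which the resulting convolution of the refined counts of Theorem~\ref{thmgeneralcase} (i.e., of Theorem~\ref{thmcoeffabc} when $d=2$) is evaluated in closed form via Rothe-type convolution identities cited from \cite{GOulAA}. Your plan --- merge two of the three colours in Theorem~\ref{thmcoeffabc}, extract the index-free factor, and evaluate the remaining sum as a Rothe--Hagen convolution --- is precisely that argument specialised to $d=2$.
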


Obviously, the generating functions corresponding to the numbers
in the above theorem must be algebraic. To be precise, it follows
from \eqref{eq:algGl} that the series $Y=C(x,y,y)$
(the generating function for the numbers in item~(i) of 
Theorem~\ref{thmCxxy}) and the series
$Z=C(x,x,y)$ (the generating function for the numbers in item~(ii) of 
Theorem~\ref{thmCxxy}) satisfy the algebraic equations
\begin{equation} \label{eq:xyy} 
(1+x)y^4-y^2(1+2y)Y+y(2+y)Y^2-Y^3=0
\end{equation}
and
\begin{equation} \label{eq:xxy} 
x^2y^2+xy(x-1)Z+Z^3=0\ ,
\end{equation}
respectively. As we announced, Theorem~\ref{thmCxxy} will be
generalised from triangulations to simplicial complexes in
Theorem~\ref{thmspec}.


Clearly, if we identify all three colours, then we are back to counting
all triangulations of the polygon $P_{n+2}$, of which there are
$C_n=\frac {1} {n+1}\binom {2n}n$.

\medskip
We end this introduction by mentioning that 
checkerboard colourings
of triangulations (obtained by colouring adjacent triangles with 
different colours chosen in a set of two colours) encode winding
properties of the corresponding $3$-vertex colouring.
Indeed, a $3$-coloured triangulation $\tau$ of $P_{n+2}$ 
induces a unique piecewise affine map $\varphi$ from $P_{n+2}$ onto 
a vertex-coloured triangle $\Delta$ such that $\varphi$ is colour-preserving
on vertices and induces affine bijections between triangles 
of $\tau$ and $\Delta$.
The map $\varphi$ is orientation-preserving, respectively 
orientation reverting, on black, respectively white, triangles
of $\tau$ endowed with a suitable black-white checkerboard colouring.
Restricting $\varphi$ to the oriented boundary of $P_{n+2}$ we get
a closed oriented path contained in the boundary 
of $\Delta$. 
The winding number of this path with respect to an interior point 
of $\Delta$ is given by the difference of black and white triangles
in the checkerboard colouring mentioned above. 
The resulting statistics 
for Catalan numbers (and the obvious generalization to
Fu{\ss}--Catalan numbers obtained by replacing winding numbers 
with the corresponding homology classes) have been studied by Callan 
in \cite{C}.

\section{Refinements of Fu{\ss}--Catalan numbers}\label{sectFussCatalan}

\subsection{Fu{\ss}--Catalan complexes}
\label{sec:Def}

Given an integer $d\geq 2$, we define
a {\it $d$-dimensional 
Fu{\ss}--Catalan complex of index $n\ge1$} to be a 
simplicial complex $\Sigma$ such that:

\begin{enumerate}
\item[(i)] $\Sigma$ is a $d$-dimensional simplicial complex
homeomorphic to a closed $d$-dimensional ball
having $n$ simplices of maximal dimension $d$.
\item[(ii)] All simplices of dimension up to $d-2$ of $\Sigma$ are contained 
in the boundary $\partial \Sigma$ (homeomorphic to a $(d-1)$-dimensional
sphere) of $\Sigma$. (Equivalently, 
the $(d-2)$-skeleton of $\Sigma$ is contained in its boundary 
$\partial\Sigma$).
\end{enumerate}

Such a complex $\Sigma$ is {\it rooted\/} if its boundary 
$\partial\Sigma$ contains a marked $(d-1)$-simplex, $\Delta_*$ say,
with totally ordered vertices. 
We denote a rooted $d$-dimensional
Fu{\ss}--Catalan complex by the pair $(\Sigma,\Delta_*)$.
By convention, 
a rooted $d$-dimensional Fu{\ss}--Catalan complex of index $0$ is 
given by $(\Delta_*,\Delta_*)$, where 
$\Delta_*$ is a simplex of dimension $d-1$ with totally ordered vertices. 

Rooted $d$-dimensional Fu{\ss}--Catalan complexes are generalisations of
rooted triangulations of polygons. In particular, a rooted
$2$-dimensional Fu{\ss}--Catalan complex of index $n$ is a 
triangulation of the rooted
polygon $P_{n+2}^\to$ with $n+2$ vertices.

Let $\fc_d(n)$ denote the number of $d$-dimensional rooted
Fu\ss--Catalan complexes of index $n$, and let $\FC_d(z)=\sum
_{n\ge 0} ^{}\fc_d(n)z^{n}$ be the corresponding generating
function. Consider a rooted $d$-dimensional 
Fu{\ss}--Catalan complex $(\Sigma,\Delta_*)$. The marked
$(d-1)$-dimensional simplex 
$\Delta_*$ is contained in a unique $d$-dimensional simplex of
$\Sigma$, which we
call the {\it root simplex} of the complex.
By deleting the root simplex,
we are left with a set of $d$ smaller Fu\ss--Catalan complexes ---
the $d$ subcomplexes which were ``glued'' to the $d$ facets of the
root simplex. (This is the extension of the standard decomposition of
a rooted triangulation when one removes the ``root triangle'').
These subcomplexes inherit also naturally a marked
$(d-1)$-dimensional simplex; that is, they are rooted Fu\ss--Catalan
complexes themselves. 
Namely, if $v_1<v_2<\dots<v_d$ is the
total order of the vertices of $\Delta_*$ and $v_0$ is the additional
vertex of the root simplex (containing $\Delta_*$), then we impose
the order
\begin{equation} \label{eq:order}
v_0<v_1<\dots<v_d
\end{equation}
on the vertices of the root simplex, and we declare the
$(d-1)$-dimensional simplex in which the subcomplex intersects the
root simplex to be the marked simplex of the subcomplex, together with
the total order which results from \eqref{eq:order} by restriction.
This decomposition leads directly to the functional equation
$$
\FC_d(z)=1+z\big(\FC_d(z)\big)^d.
$$
Under the substitution $\FC_d(z)=1+f_d(z)$, this is equivalent to
$$
\frac {f_d(z)} {\big(1+f_d(z)\big)^d}=z.
$$
This shows that $f_d(z)$ is the compositional inverse series of
$z/(1+z)^d$. Consequently, the coefficient of $z^n$ in $f_d(z)$, 
which equals the number $\fc_d(n)$, can be found using the Lagrange
inversion formula (cf.\ \cite[Theorem~5.4.2 with $k=1$]{StanBI}).
The result is the Fu\ss--Catalan number \eqref{eq:FC}; that is, the
number of $d$-dimensional rooted Fu\ss--Catalan complexes of index $n$
is indeed given by $\frac {1} {n}\binom {dn}{n-1}$.

\subsection{Elementary properties of Fu{\ss}--Catalan complexes}
\label{sec:elem}

A Fu{\ss}--Catalan complex is completely determined by its
$1$-skeleton. This is seen by gluing simplices onto all cliques 
(maximal complete subgraphs) of the
$1$-skeleton. The boundaries of two different 
rooted Fu{\ss}--Catalan complexes 
of dimension $>2$ are thus always 
combinatorially inequivalent when taking into account the marked 
simplex $\Delta_*$ with its totally ordered vertices.

However, the dimension $d$ and the number of vertices (or, 
equivalently, $d$ and the number of $d$-dimensional
simplices) determine the number of simplices of given dimension
in a Fu{\ss}--Catalan complex completely: for $i=0,1,\dots,d-1$, let 
$\tilde f_i(n,d)$ denote the number 
of $i$-simplices contained in the
boundary of a $d$-dimensional Fu{\ss}--Catalan complex $(\Sigma,\Delta_*)$
consisting of $n>0$ simplices of maximal dimension $d$.
(The interior of $\Sigma$ contains of course $n$ simplices of maximal dimension
$d$ separated by $(n-1)$ simplices of dimension $d-1$.)
We then have $\tilde f_i(1,d)=\binom {d+1} {i+1}$ for $i\in\{0,1,\dots,d-1\}$
since a Fu{\ss}--Catalan complex with $n=1$ is a $d$-dimensional
simplex, which has $\binom {d+1} {i+1}$ simplices of dimension $i$.
For $n\geq 1$, there hold the explicit formulae
\begin{align}
\tilde f_{d-1}(n,d)&=n(d-1)+2,\\
\tilde f_i(n,d)&=n{\binom d i}+\binom {d}{i+1},
\quad \quad \text {for }i=0,1,\dots,d-2. 
\end{align}
Indeed,
gluing an additional $d$-dimensional simplex to a Fu{\ss}--Catalan complex
adds one vertex and $d$ new $(d-1)$-dimensional simplices on the boundary and
hides a unique  $(d-1)$-dimensional simplex in the interior.
Moreover, for $i<d-1$, an $i$-simplex is either contained in the boundary of
the old complex or it involves the newly added point and is entirely 
contained in the added new $d$-dimensional simplex. 
In particular, there are $\binom di$ $i$-simplices of the latter kind.

It is natural to ask whether Fu\ss--Catalan complexes admit
``natural" realisations as polytopes. We shall present such
a realisation in the next paragraph. It is based on the observation
that Fu\ss--Catalan complexes of dimension $d$ can equivalently be
described by $(d-1)$-dimen\-sional {\it Schlegel diagrams}.
In order to explain this alternative description,
we embed a given Fu\ss--Catalan complex as a $d$-dimen\-sional convex
polytope $\mathcal P$ of $\mathbb R^d$. We choose now a point $O\in 
\mathbb R^d\setminus\mathcal P$ such that the convex hull of $\mathcal P$
and $O$ is obtained by gluing a unique simplex spanned by $O$ and 
$\Delta_*$ onto $\mathcal P$. 
We require moreover that every segment
joining $O$ to a vertex of $\mathcal P\setminus \Delta_*$ intersects 
the marked boundary simplex $\Delta_*$ in its interior. 
The central projection of $\mathcal P$ onto $\Delta_*$
with respect to the point $O$ is then called a Schlegel diagram
of $\mathcal P$. It contains all the combinatorial information 
allowing the reconstruction of the initial Fu\ss-Catalan complex.
More precisely, it is given (up to combinatorial
equivalence) by so-called barycentric subdivisions
starting with the marked 
simplex $\Delta_*$ (which, 
as always, we consider with the extra-structure given by its
completely ordered vertices): 
a barycentric subdivision of a $(d-1)$-dimensional
simplex $\Delta$ with vertices $\mathcal V$
is obtained by partitioning $\Delta$ into $d$ simplices $\Delta_v$,
indexed by $v\in\mathcal V$,
defined by considering the convex hull of $\mathcal V\setminus\{v\}$
and of the barycenter $b=\frac{1}{d}\sum_{w\in\mathcal V}w$ of 
$\Delta$. Iterating barycentric subdivisions $n$ times in all possible 
ways starting with 
the $(d-1)$-dimensional simplex $\Delta_*$ gives exactly the set of
all Schlegel diagrams (as described above) of all $d$-dimensional
Fu\ss--Catalan complexes consisting of $n$ simplices of maximal dimension
$d$. 
Note that barycentric subdivisions add only points with rational 
coordinates if all vertices of $\Delta_*$ have rational coordinates
(more precisely, all vertices belong to $A\left(\mathbb Z\left[\frac{1}{d}\right]
\right)^d$ if $A$ is a positive integer such that $A\Delta_*$ 
has integral coordinates). A pleasant feature of
barycentric subdivisions is the fact that they carry a natural
distributive lattice structure (defined by unions and intersections).

A (more or less) natural polytope $\mathcal P\subset \mathbb R^d$
representing a given $d$-dimensional Fu\ss-Catalan complex $(\Sigma,\Delta_*)$
can now be constructed as follows: choose the $d$ ordered points
$$(1-d,1,1,\dots,1)<(1,1-d,1,1,\dots,1)<\dots<(1,1,\dots,1,1,d-1)$$
of $\mathbb Z^d$
as vertices for $\Delta_*$ and use $\Delta_*$ for constructing 
a barycentric subdivision $BS$ corresponding to $(\Sigma,\Delta_*)$.
Associate to a  vertex $V=(a_1,a_2,\dots,a_d)$ of $BS$ the 
point 
$$\tilde V=(a_1,a_2,\dots,a_d)+(1,1,\dots,1)\sum_{j=1}^da_j^2\in \mathbb Q^d\ .$$
The set of all points $\tilde V$
associated to vertices of $BS$ is then the
set of vertices of a polytope realising $(\Sigma,\Delta_*)$ in 
$\mathbb R^d$.

\subsection{$(d+1)$-colourings of $d$-dimensional 
Fu{\ss}--Catalan complexes}
\label{subsectdcoulrings}

Let $\mathcal C$ be a set of colours.
A {\it proper colouring} of a simplicial complex $\Sigma$ with vertex set
$\mathcal V$ by colours from $\mathcal C$ 
is a map $\gamma:\mathcal V\longrightarrow
\mathcal C$  such that $\gamma(v)\ne\gamma(w)$
for any pair of vertices $v,w$ defining a $1$-simplex of $\Sigma$.
Equivalently, a proper colouring of a simplicial complex $\Sigma$ is 
a proper colouring of the graph defined by the $1$-skeleton of $\Sigma$.

Every rooted $d$-dimensional Fu{\ss}--Catalan complex $(\Sigma,\Delta_*)$
has a unique colouring by $(d+1)$ totally ordered
colours $c_0<c_1<\dots<c_d$ such that the $i$-th vertex of $\Delta_*$
(in the given total order of the vertices of $\Delta_*$)
has colour $c_{i}$, $i=1,2,\dots,d$.
The following theorem presents a closed formula for the number of
Fu{\ss}--Catalan complexes of index $n$ with a given colour
distribution.

\begin{thm}\label{thmgeneralcase} 
Let $d$, $n$, $\gamma_0,\gamma_1,\dots,\gamma_d$ 
be non-negative integers with $d\ge2$ and
$\gamma_0+\gamma_1+\dots+\gamma_d=n+d$. Then
the number of $d$-dimensional Fu{\ss}--Catalan complexes
$(\Sigma,\Delta_*)$ of index $n$
with $\gamma_i$ vertices of colour $c_i$, $i=0,1,\dots,d$,
in the uniquely determined proper colouring by the colours
$c_0,c_1,\dots,c_d$ in which
the $i$-th vertex of the root simplex $\Delta_*$ has colour $c_i$,
$i=1,2,\dots,d$,
is equal to
\begin{equation}\label{formdimd}
s^{d-1}\frac {\gamma_0} {s-\gamma_0+1}{\binom {s-\gamma_0+1} {
\gamma_0}}\prod_{j=1}^{d}\frac {1} {s-\gamma_j+1}
{{\binom {s-\gamma_j+1
} { \gamma_j-1}}}\ ,
\end{equation}
where $s=-d+\sum_{j=0}^{d}\gamma_j$.
In the case where $\gamma_0=0$, this has to be
interpreted as the limit $\gamma_0\to0$, that is, it is $1$ if 
$(\gamma_0,\gamma_1,\dots,\gamma_d)=(0,1,1,\dots,1)$ and $0$ otherwise.
\end{thm}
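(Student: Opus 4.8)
The plan is to encode the colour counts in a multivariate generating function
\[
C(x_0,\dots,x_d)=\sum_{\gamma_0,\dots,\gamma_d\ge0}C_{\gamma_0,\dots,\gamma_d}\,x_0^{\gamma_0}\cdots x_d^{\gamma_d},
\]
where $x_i$ records colour $c_i$ and $C_{\gamma_0,\dots,\gamma_d}$ is the number to be computed, and then to solve the resulting functional equation by the Lagrange--Good inversion formula \cite{GoodAA}. First I would turn the standard decomposition of Section~\ref{sec:Def} (delete the root simplex, leaving $d$ rooted subcomplexes glued to its non-boundary facets) into an equation. The one delicate point is colour bookkeeping: the subcomplex glued to the facet opposite the $j$-th vertex of the root simplex carries the ambient colours $\{c_0,\dots,c_d\}\setminus\{c_j\}$ on its own marked simplex, and matching these to its canonical colours $c_1,\dots,c_d$ forces the relabelling $c_0\mapsto c_j$, $c_k\mapsto c_{k-1}$ for $1\le k\le j$, and $c_k\mapsto c_k$ for $k>j$; in generating-function terms the corresponding subcomplex series is $D_j(x_0,\dots,x_d)=C(x_j,x_0,\dots,x_{j-1},x_{j+1},\dots,x_d)$. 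Tracking the vertices shared between the root simplex and the subcomplexes (the apex lies in all $d$ of them, each other root vertex in $d-1$ of them) then yields, with $P=x_0x_1\cdots x_d$,
\[
C=x_1x_2\cdots x_d+\frac{P^{2-d}}{x_0}\prod_{j=1}^{d}D_j .
\]

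The decisive simplification is the renormalisation $D_i=(P/x_i)E_i$. I expect this, together with the $d$ analogues obtained by cyclically rotating the role of the apex colour, to collapse the equation to the fully symmetric system
\[
E_i=1+x_i\prod_{m\ne i}E_m,\qquad i=0,1,\dots,d,
\]
with $C=(x_1\cdots x_d)\,E_0$. Since the marked simplex carries each of $c_1,\dots,c_d$ exactly once, recovering $C_{\gamma_0,\dots,\gamma_d}$ amounts to reading off $[x_0^{\beta_0}\cdots x_d^{\beta_d}]E_0$ with $\beta_0=\gamma_0$ and $\beta_i=\gamma_i-1$ for $i\ge1$; note $\sum_i\beta_i=n=s$. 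Setting $w_i=E_i-1$ puts the system in Lagrange--Good form $w_i=x_i\,g_i(\mathbf w)$ with $g_i(\mathbf w)=\prod_{m\ne i}(1+w_m)$ and $g_i(\mathbf 0)=1$, so with $\Phi(\mathbf w)=w_0$ the inversion formula gives
\[
[x^\beta]E_0=[w^\beta]\Bigl\{w_0\prod_{i=0}^d g_i(\mathbf w)^{\beta_i}\det\Bigl(\delta_{ij}-\frac{w_i}{g_j}\frac{\partial g_j}{\partial w_i}\Bigr)\Bigr\}.
\]

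Two of the three factors reduce cleanly. Because $\sum_i\beta_i=s$, the product $\prod_i g_i^{\beta_i}$ collapses to $\prod_i(1+w_i)^{s-\beta_i}$; and since $\partial_{w_i}\log g_j=(1-\delta_{ij})/(1+w_i)$, the Jacobian is a rank-one update of a diagonal matrix, so after scaling row $i$ by $(1+w_i)$ the matrix-determinant lemma identifies it as $B_d/\prod_i(1+w_i)$ with $B_d=\det(\mathrm{diag}(1+2w_i)-\mathbf w\,\mathbf 1^{\top})$. Expanding $B_d$ in elementary symmetric functions gives $B_d=\sum_{r}2^{r-1}(2-r)\,e_r(\mathbf w)$, in which, suggestively, the degree-two term already vanishes.

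The main obstacle is the final coefficient extraction. After the reductions above one must prove
\[
[w^\beta]\Bigl\{w_0\prod_{m=0}^d(1+w_m)^{s-\beta_m-1}B_d\Bigr\}=s^{d-1}\binom{s-\beta_0}{\beta_0-1}\prod_{j=1}^d\frac{1}{s-\beta_j}\binom{s-\beta_j}{\beta_j},
\]
which, via $\frac{\beta_0}{s-\beta_0+1}\binom{s-\beta_0+1}{\beta_0}=\binom{s-\beta_0}{\beta_0-1}$ and $s=n$, is exactly \eqref{formdimd}. Since $B_d$ splits over the coordinates, the left-hand side becomes a signed, $2$-power-weighted sum over subsets $T\subseteq\{0,\dots,d\}$ of products of single binomial coefficients, and the real work is to collapse this subset sum to the stated product; I expect the delicate cancellation of the powers of $2$ to be the genuinely hard part, to be handled by induction on $d$ or by a generating-function manipulation. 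I would first dispose of the degenerate case $\beta=0$ (that is, $(\gamma_0,\dots,\gamma_d)=(0,1,\dots,1)$), which is simply the constant term $E_0(\mathbf 0)=1$ and matches the limit convention in the statement, the Lagrange--Good computation being needed only for $\beta\ne0$. Finally, specialising the identity to $d=2$ recovers Theorem~\ref{thmcoeffabc}.
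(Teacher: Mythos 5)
Your structural setup coincides with the paper's own proof: the same multivariate generating function, the same root-simplex decomposition (your $D_j$ equals the paper's cyclically shifted series $C^{\{j\}}$, since $C$ is symmetric in $x_1,\dots,x_d$), the same renormalisation (your $E_i$ is the paper's $1+g_i$), the same system $E_i=1+x_i\prod_{m\ne i}E_m$, and the same invocation of Lagrange--Good. Your reduction of \eqref{formdimd} to the coefficient identity
\[
\bigl[\mathbf w^{\boldsymbol\beta}\bigr]\Bigl\{w_0\,B_d\prod_{m=0}^d(1+w_m)^{s-\beta_m-1}\Bigr\}
=s^{d-1}\binom{s-\beta_0}{\beta_0-1}\prod_{j=1}^d\frac{1}{s-\beta_j}\binom{s-\beta_j}{\beta_j}
\]
is also correct: it is the paper's \eqref{eq:Res} written in the shifted variables $\beta_i$. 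But at exactly this point --- which is where the computational content of the theorem sits --- you stop. You expand $B_d=\sum_r2^{r-1}(2-r)e_r(\mathbf w)$, note that the left-hand side becomes a signed, $2$-power-weighted sum over subsets, and declare that collapsing it is ``the genuinely hard part, to be handled by induction on $d$ or by a generating-function manipulation,'' with no argument supplied. That is a genuine gap: the remaining identity is exactly as strong as the theorem itself, and nothing in your proposal proves it.

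The gap is closable, and in fact you discard the tool that closes it. The matrix-determinant lemma you invoke yields $B_d$ in the \emph{product} form
\[
B_d=\Bigl(1-\sum_{k=0}^d\frac{w_k}{1+2w_k}\Bigr)\prod_{m=0}^d(1+2w_m)
\]
(equivalently, Proposition~\ref{propdetjac}), and it is this form, not the elementary-symmetric expansion, that should be inserted into the coefficient extraction. With it, the expression inside $[\mathbf w^{\boldsymbol\beta}]$ factors over the coordinates: every coordinate $m$ contributes $(1+w_m)^{s-\beta_m-1}(1+2w_m)$, except that in the $k$-th summand the factor for $m=k$ is $(1+w_m)^{s-\beta_m-1}w_m$. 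Each univariate coefficient is then a two-term combination of binomial coefficients, and the single identity $\binom{M-1}{K}+2\binom{M-1}{K-1}=\frac{M+K}{M}\binom{M}{K}$, applied with $M=s-\beta_m$, $K=\beta_m$ (so that $M+K=s$), together with $\sum_{k=1}^d\beta_k=s-\beta_0$, collapses everything in a few lines of algebra --- no induction on $d$ is needed. This is precisely how the paper completes the proof; with this replacement your outline becomes a complete proof along the paper's lines, whereas the subset-sum route you sketch would require establishing a delicate multi-parameter binomial identity from scratch.
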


Formula~\eqref{formdimd} generalises Formula~\eqref{formulad=2},
the latter corresponding to the case $d=2$ of the former.

\subsection{Specialisations obtained by identifying colours}

Generalising the scenario in Theorem~\ref{thmCxxy}, we now identify
some of the colours. Namely,
given a non-negative integer $k$ and $k+1$ positive integers 
$\beta_0,\beta_1,\beta_2,\dots,\beta_k$ with
$\beta_0+\beta_1+\beta_2+\dots+\beta_k=d+1$,
we set
\begin{align*}
c_0=\dots=c_{\beta_0-1}&=c'_0\\
c_{\beta_0}=\dots=c_{\beta_0+\beta_1-1}&=c'_1\\
&\vdots\\
c_{\beta_0+\beta_1+\dots+\beta_{i-1}}=\dots
=c_{\beta_0+\beta_1+\dots+\beta_i-1}&=c'_i\\
&\vdots\\
c_{\beta_0+\beta_1+\dots+\beta_{k-1}}=\dots
=c_{d}&=c'_k.\\
\end{align*}
Given a rooted Fu{\ss}--Catalan complex $(\Sigma,\Delta_*)$ 
with its uniquely determined
colouring as in Theorem~\ref{thmgeneralcase}, after this
identification we obtain a colouring of the simplices of
$(\Sigma,\Delta_*)$ in which each $d$-dimensional simplex
has $\beta_i$ vertices of colour $c'_i$, $i=0,1,\dots,k$. Our next theorem 
presents a closed formula for the number of $d$-dimensional 
Fu{\ss}--Catalan complexes of index $n$ with a given colour
distribution {\it after} this identification of colours.

\begin{thm}\label{thmspec} 
Let $d$, $k$, $n$, 
$\beta_0,\beta_1,\dots,\beta_k$, 
$\gamma_0,\gamma_1,\dots,\gamma_k$ 
be non-negative integers with $d\ge2$,
$\beta_0+\beta_1+\beta_2+\dots+\beta_k=d+1$, and
$\gamma_0+\gamma_1+\dots+\gamma_k=n+d$. Then
the number of $d$-dimensional Fu{\ss}--Catalan complexes
$(\Sigma,\Delta_*)$ of index $n$
with $\gamma_i$ vertices of colour $c'_i$, $i=0,1,\dots,k$,
in the uniquely determined colouring in which
the first $\beta_0-1$ vertices of the root simplex $\Delta_*$ have 
colour $c'_0$, 
the next $\beta_1$ vertices have colour $c'_1$, 
the next $\beta_2$ vertices have colour $c'_2$, \dots,
the last $\beta_k$ vertices have colour $c'_k$, 
and in which each $d$-dimensional simplex has $\beta_i$ vertices
of colour $c'_i$, $i=0,1,\dots,k$,
is equal to
\begin{equation*}
s^{k-1}\frac {\gamma_0-\beta_0+1} {\beta_0 s+\beta_0-\gamma_0}
{\binom {\beta_0 s+\beta_0-\gamma_0} { \gamma_0-\beta_0+1}}
\prod_{j=1}^k
\frac{\beta_j}{\beta_js+\beta_j-\gamma_j}
{\binom {\beta_js+\beta_j-\gamma_j} { 
\gamma_j-\beta_j}}\ ,
\end{equation*}
where $s=-d+\sum_{j=0}^{k}\gamma_j$.
\end{thm}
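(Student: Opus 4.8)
The plan is to deduce Theorem~\ref{thmspec} from Theorem~\ref{thmgeneralcase} by identifying colours at the level of generating functions, and then to re-extract coefficients by a second application of the Lagrange--Good inversion formula. First I would recall from Section~\ref{sectproofgeneral} the multivariate generating function $C(x_0,\dots,x_d)=\sum N(\gamma_0,\dots,\gamma_d)\,x_0^{\gamma_0}\cdots x_d^{\gamma_d}$, whose coefficients are the numbers of Theorem~\ref{thmgeneralcase}, together with the functional-equation system that the decomposition of a rooted complex along its root simplex imposes on $C$. The identification $c_0=\dots=c_{\beta_0-1}=c'_0,\ c_{\beta_0}=\dots=c'_1,\ \dots$ corresponds exactly to the substitution $x_j\mapsto X_i$ for every fine colour $c_j$ lying in the $i$-th group. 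Hence the quantity asked for in Theorem~\ref{thmspec} is $[X_0^{\gamma_0}\cdots X_k^{\gamma_k}]\,\tilde C$, where $\tilde C(X_0,\dots,X_k)$ is the substituted series; equivalently it is the sum of the numbers in Theorem~\ref{thmgeneralcase} over all refinements $(\gamma'_0,\dots,\gamma'_d)$ of $(\gamma_0,\dots,\gamma_k)$, and one checks that every such complex indeed has $\beta_i$ vertices of colour $c'_i$ in each $d$-simplex, so that the identification is combinatorially faithful.

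The key structural observation is that $\tilde C$ again satisfies a functional-equation system, now in the $k+1$ variables $X_0,\dots,X_k$. When the root simplex is removed, its $d$ subcomplexes group according to the colour $c'_i$ of the vertex each one ``omits''; after identification, subcomplexes carrying the same group become indistinguishable, so the product over the $d$ branches collapses to a product over the $k+1$ groups in which the $i$-th factor occurs with multiplicity $\beta_i$, the group containing the apex occurring with multiplicity $\beta_0-1$ (the apex colour $c_0$ is absorbed into $c'_0$ but is not among the omitted base colours). This is precisely how the parameters $\beta_i$ enter, and it already explains the asymmetry between the $c'_0$-factor (with $\gamma_0-\beta_0+1$) and the remaining factors (with $\gamma_j-\beta_j$) in the claimed formula. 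I would record the reduced system as $W_i=X_i\,\Phi_i(W_0,\dots,W_k)$, where each $\Phi_i$ is a product of factors of the form $(1+\cdots)$ raised to powers governed by the $\beta_i$, the total exponent being $d$ as in the plain equation $\FC_d=1+z\,\FC_d^d$.

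Next I would apply Good's multivariate Lagrange inversion to this reduced system to extract $[X_0^{\gamma_0}\cdots X_k^{\gamma_k}]\,\tilde C$. This turns the coefficient into the constant term of a product of the $\Phi_i^{\gamma_i}$ against the Lagrange--Good Jacobian determinant $\det\bigl(\delta_{ij}-\frac{W_i}{\Phi_j}\frac{\partial\Phi_j}{\partial W_i}\bigr)$. Expanding each $\Phi_i^{\gamma_i}$ by the binomial theorem on its $(1+\cdots)^{\beta_i\gamma_i}$-type factors produces the binomial coefficients $\binom{\beta_j s+\beta_j-\gamma_j}{\gamma_j-\beta_j}$, while evaluating the Jacobian determinant produces the prefactor $s^{k-1}$ together with the rational coefficients $\frac{\gamma_0-\beta_0+1}{\beta_0 s+\beta_0-\gamma_0}$ and $\frac{\beta_j}{\beta_j s+\beta_j-\gamma_j}$, with $s=-d+\sum_{j=0}^k\gamma_j$.

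I expect the main obstacle to be twofold. The first is getting the reduced system and the placement of the $\beta_i$ exactly right, so that the branch multiplicities $\beta_0-1$ and $\beta_j$ are correctly tracked through the relabellings induced on the colour classes by the root-simplex decomposition. The second, and genuinely technical, heart of the argument is the closed-form evaluation of the Lagrange--Good Jacobian determinant: its entries are linear in the exponents of the $W$'s and in the $\gamma_j$, and it is this determinant that manufactures the single global factor $s^{k-1}$ together with the specific rational prefactors. Once these are in hand, matching the extracted coefficient against the stated product is routine. Finally I would confirm the small-$n$ and vanishing-$\gamma_i$ boundary cases, so that the formula and its implicit limiting conventions (as in the $\gamma_0\to0$ reading of Theorem~\ref{thmgeneralcase}) are validated.
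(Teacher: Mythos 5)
Your strategy is sound, and it is genuinely different from the paper's proof. The paper proves Theorem~\ref{thmspec} by \emph{reverse induction on $k$}, with Theorem~\ref{thmgeneralcase} (the case $k=d$, all $\beta_i=1$) as the base case: merging two colour classes at a time, the induction step is equivalent to a Vandermonde-type convolution, which is a classical identity of Rothe (in Gould's formulation), so no further generating-function work is needed — though the paper does need two separate cases, according to whether the apex class $c_0'$ is involved in the merge. You instead rerun the Lagrange--Good machinery of Section~\ref{sectproofgeneral} directly in the identified variables: the substitution $x_j\mapsto X_i$ collapses the system \eqref{eqndefCd} to the reduced system $W_i=X_i\,(1+W_i)^{-1}\prod_{l=0}^{k}(1+W_l)^{\beta_l}$, $i=0,1,\dots,k$, and your bookkeeping of branch multiplicities (the apex group entering with exponent $\beta_0-1$, which is exactly where the asymmetry between the $c_0'$-factor and the other factors comes from) is correct; here your approach absorbs automatically the case distinction that the paper must make by hand. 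Two points need to be made explicit to complete your argument. First, the collapse of the system requires that the series $C^{\{j\}}$ with $j$ in the same group coincide after the substitution; this holds because $C_d$ is symmetric in its last $d$ arguments (combinatorially, permuting the total order of $\Delta_*$ permutes the colour classes of $c_1,\dots,c_d$ and fixes that of $c_0$; alternatively it can be read off from the already-established formula \eqref{formdimd}). Second, the Jacobian you single out as the technical heart is in fact benign: the relevant matrix has entries $\delta_{ij}-W_j(\beta_j-\delta_{ij})/(1+W_j)$, a rank-one perturbation of a diagonal matrix, whence its determinant equals $\prod_{j=0}^{k}\frac{1+2W_j}{1+W_j}\bigl(1-\sum_{j=0}^{k}\frac{\beta_jW_j}{1+2W_j}\bigr)$, generalising Proposition~\ref{propdetjac}; carrying out the coefficient extraction with exponents $\alpha_0=\gamma_0-\beta_0+1$ and $\alpha_j=\gamma_j-\beta_j$ then reproduces the stated product, the single-variable coefficients $\binom{\beta_js-\alpha_j-1}{\alpha_j}+2\binom{\beta_js-\alpha_j-1}{\alpha_j-1}=\frac{\beta_js}{\beta_js-\alpha_j}\binom{\beta_js-\alpha_j}{\alpha_j}$ playing the role of the factors of the paper's quantity $P$, so your plan does close. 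As for what each route buys: the paper's induction is shorter and outsources all computation to known convolution identities; your argument is uniform in all the $\beta_i$ at once, stays entirely within the Lagrange--Good framework, and yields as a byproduct the algebraic equations satisfied by the coarse generating functions (such as \eqref{eq:xyy} and \eqref{eq:xxy}), which the paper states without derivation.
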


This theorem contains all the afore-mentioned results as special
cases. Clearly, Theorem~\ref{thmgeneralcase} is the special case
of Theorem~\ref{thmspec}
where $k=d$ and $\beta_0=\beta_1=\dots=\beta_d=1$
(and Theorem~\ref{thmcoeffabc} is the further special case in
which $d=2$). Item~(i) of Theorem~\ref{thmCxxy} results for
$d=2$, $k=1$, $\beta_0=1$, $\beta_1=2$, while item~(ii) results
for $d=2$, $k=1$, $\beta_0=2$, $\beta_1=1$. Moreover, upon
setting $k=0$ and $\beta_0=d+1$ in Theorem~\ref{thmspec}, we
obtain Formula~\eqref{eq:FC} (and \eqref{eq:Cat} in the further
special case where $d=2$).

\section{Generating functions and the Lagrange--Good inversion formula}
\label{sectproofgeneral}

In this section we provide the proof of Theorem~\ref{thmgeneralcase}. 
It makes use of generating function calculus, which serves to 
reach a situation in which the Lagrange--Good inversion formula
\cite{GoodAA}
(see also \cite[Sec.~5]{KratAC} and the references cited therein)
can be applied to compute the numbers that we are interested in.
The proof requires as well a determinant evaluation, which we
state and establish separately at the end of this section.

\begin{proof}[Proof of Theorem~\ref{thmgeneralcase}]
Let 
$$C_d(x_0,x_1,\dots,x_{d}):=
\sum_{(\Sigma,\Delta_*)}
x_0^{\gamma_0(\Sigma,\Delta_*)}
x_1^{\gamma_1(\Sigma,\Delta_*)}\cdots
x_d^{\gamma_d(\Sigma,\Delta_*)},
$$
where the sum is over all $d$-dimensional Fu{\ss}--Catalan complexes
$(\Sigma,\Delta_*)$ (of any index,
including the $(d-1)$-dimensional complex $(\Delta_*,\Delta_*)$
of index $0$), and where
$\gamma_i(\Sigma,\Delta_*)$ denotes the number of vertices of colour
$c_i$ in the unique colouring of $(\Sigma,\Delta_*)$ described
in the statement of Theorem~\ref{thmgeneralcase}.
It is our task to compute the coefficient of 
$x_0^{\gamma_0}x_1^{\gamma_1}\cdots x_d^{\gamma_d}$ in
the series $C_d(x_0,x_1,\dots,x_{d})$.

Starting from our generating function $C_d(x_0,x_1,\dots,x_{d})$,
we define $d+1$ series by cyclically permuting the variables,
\begin{align*}
C^{\{0\}}(x_0,x_1,\dots,x_d)&=C_d(x_0,x_1,\dots,x_{d}),\\
C^{\{1\}}(x_0,x_1,\dots,x_d)&=C_d(x_1,x_2,\dots,x_{d},x_0),\\
&\vdots\\
C^{\{d\}}(x_0,x_1,\dots,x_d)&=C_d(x_{d},x_0,x_1,\dots,x_{d-1}).
\end{align*} 
The decomposition of rooted $d$-dimensional 
Fu{\ss}--Catalan complexes
$(\Sigma,\Delta_*)$ determined by the unique $d$-dimensional simplex
containing $\Delta_*$, which we described in Section~\ref{sec:Def},
yields a system of equations relating these $d+1$ series.
To be precise, let $(\Sigma,\Delta_*)$ be
a rooted $d$-dimensional Fu{\ss}--Catalan complex of index $n\geq 1$,
and let $\Delta_*^d$ be its unique $d$-dimensional simplex 
containing $\Delta_*$. It intersects
$\Sigma\setminus \Delta_*^d$ along $d$ rooted
sub-Fu{\ss}--Catalan complexes,
with their marked $(d-1)$-dimensional simplices defined by 
their intersection with
the boundary of $\Delta_*^d$. These sub-complexes define a
decomposition of $(\Sigma,\Delta_*)$. It shows that
$$
C_d(x_0,x_1,\dots,x_d)
={x_1\cdots x_{d}}+\frac{1}{x_0(x_0x_1\cdots x_{d})^{d-2}}
{\prod_{j=1}^d}C^{\{j\}}(x_0,x_1,\dots,x_{d}),
$$
and, more generally,
\begin{multline}\label{eqndefCd}
C^{\{i\}}(x_0,x_1,\dots,x_d)
=\frac{x_0x_1\cdots x_{d}}{x_i}+\frac{1}{x_i(x_0x_1\cdots x_{d})^{d-2}}
\underset{j\ne i}{\prod_{j=0}^d}C^{\{j\}}(x_0,x_1,\dots,x_{d}),\\
i=0,1,\dots,d. 
\end{multline}
In order to simplify this system of equations,
we define $d+1$ series $g_0,g_1,\dots,g_{d}$ 
by the equations
\begin{equation}\label{defgfromC}
C^{\{i\}}(x_0,x_1,\dots,x_{d})=\frac{1}{x_i}
\left(1+g_i(x_0,x_1,\dots,x_{d})\right)
\prod_{j=0}^{d}x_j,\qquad  i=0,1,\dots,d.
\end{equation}
The reader should keep in mind that we want to compute the coefficient
of $x_0^{\gamma_0}x_1^{\gamma_1}\cdots x_d^{\gamma_d}$ in
the series $C_d(x_0,x_1,\dots,x_{d})$, that is, in terms of the new
series, the coefficient of
$x_0^{\gamma_0}x_1^{\gamma_1-1}x_2^{\gamma_2-1}\cdots x_d^{\gamma_d-1}$ in
the series $g_0(x_0,x_1,\dots,x_{d})$.

From now on, we suppress the arguments of series for the sake of
better readability; that is, we write $g_i$ instead of
$g_i(x_0,x_1,\dots,x_d)$, etc., for short. 
With this notation, the system \eqref{eqndefCd} becomes
$$g_i=\frac{x_i}{(1+g_i)}\prod_{j=0}^{d}(1+g_j),
\quad \quad i=0,1,\dots,d,$$
or, equivalently,
$$x_i=\frac{g_i(1+g_i)}{\prod_{j=0}^{d}(1+g_j)},\qquad
i=0,1,\dots,d.$$
By a straightforward application of the Lagrange--Good inversion
formula \cite{GoodAA},
we have
$$
\coef{\mathbf x^{\boldsymbol \gamma}}g_0
=\coef{\mathbf x^{-\mathbf1}}
x_0\det(J_{d+1})
\prod_{j=0}^{d}\frac{(1+x_j)^{d+\vert\boldsymbol\gamma\vert-\gamma_j}}{x_j^{\gamma_j+1}}\ ,$$
where $\coef{\mathbf x^{\boldsymbol \gamma}}g_0$ denotes the
coefficient of $x_0^{\gamma_0}x_1^{\gamma_1}\cdots x_d^{\gamma_d}$ in
the series $g_0$, $\coef{\mathbf x^{-\mathbf1}}f$ denotes the
coefficient of $x_0^{-1}x_1^{-1}\cdots x_d^{-1}$ in
the series $f$, 
$\vert\boldsymbol\gamma\vert$ stands for $\sum_{j=0}^{d}\gamma_j$, and 
$J_{d+1}$ is the Jacobian of the map $(x_0,x_1,\dots,x_{d})
\longmapsto (y_0,y_1,\dots,y_{d})$
defined by
$$y_i=\frac{x_i(1+x_i)}{\prod_{j=0}^{d}(1+x_j)},\quad \quad 
i=0,1,\dots,d.$$
A simple computation yields that
the entries of $J_{d+1}$ are given by 
\begin{alignat*}2
(J_{d+1})_{i,j}&
=-\frac{x_i(1+x_i)}{(1+x_j)\prod_{k=0}^{d}(1+x_k)},&\qquad 
\text {if }i&\ne j,\\
(J_{d+1})_{i,i}&
=\frac{1+x_i}{\prod_{k=0}^{d}(1+x_k)}.&\quad 
\end{alignat*}
By Proposition~\ref{propdetjac} at the end of this section, it follows that
\begin{align*}
\coef{\mathbf x^{\boldsymbol \gamma}}g_0
&=\coef{\mathbf x^{-\mathbf1}}
x_0\bigg(\prod_{j=0}^{d}\frac{(1+x_j)^{\vert\boldsymbol\gamma\vert-\gamma_j-1}
(1+2x_j)}
{x_j^{\gamma_j+1}}\bigg)
\bigg(1-\sum_{k=0}^{d}\frac {x_k} {1+2x_k}\bigg)\\
&=\coef{\mathbf x^{\boldsymbol \gamma}}
x_0\bigg(\prod_{j=0}^{d}(1+x_j)^{\vert\boldsymbol\gamma\vert-\gamma_j-1}
(1+2x_j)\bigg)
\bigg(1-\sum_{k=0}^{d}\frac {x_k} {1+2x_k}\bigg).
\end{align*}
Consequently, we get
\begin{align*}
&\kern-5pt
\coef{\mathbf x^{\boldsymbol \gamma}}g_0\\
&=\left({\binom {\vert\boldsymbol\gamma\vert-\gamma_0-1} 
{ \gamma_0-1}}+2{\binom {\vert\boldsymbol\gamma\vert-\gamma_0-1} {
\gamma_0-2}}
\right)\prod_{j=1}^{d}\left({\binom
  {\vert\boldsymbol\gamma\vert-\gamma_j-1} 
{ \gamma_j}}+2
{\binom {\vert\boldsymbol\gamma\vert-\gamma_j-1} { \gamma_j-1}}\right)\\
&\kern1cm
-{\binom {\vert\boldsymbol\gamma\vert-\gamma_0-1} { \gamma_0-2}}
\prod_{j=1}^{d}\left({\binom {\vert\boldsymbol\gamma\vert-\gamma_j-1} 
{ \gamma_j}}+2
{\binom {\vert\boldsymbol\gamma\vert-\gamma_j-1} { \gamma_j-1}}\right)\\
&\kern1cm
-\left({\binom {\vert\boldsymbol\gamma\vert-\gamma_0-1} 
{ \gamma_0-1}}+2{\binom {\vert\boldsymbol\gamma\vert-\gamma_0-1} {
\gamma_0-2}}
\right)\kern2cm\\
&\kern2cm
\times \sum_{k=1}^{d}{\binom
  {\vert\boldsymbol\gamma\vert-\gamma_k-1} 
{ \gamma_k-1}}\underset{j\ne k}{\prod_{j=1}^d}
\left({\binom {\vert\boldsymbol\gamma\vert-\gamma_j-1} { \gamma_j}}+2
{\binom {\vert\boldsymbol\gamma\vert-\gamma_j-1} { \gamma_j-1}}\right)\ .
\end{align*}
Setting 
\begin{align*}
P&=\prod_{j=1}^{d}\left({\binom
  {\vert\boldsymbol\gamma\vert-\gamma_j-1} 
{ \gamma_j}}+2
{\binom {\vert\boldsymbol\gamma\vert-\gamma_j-1} { \gamma_j-1}}\right)\\
&=\vert\boldsymbol\gamma\vert^d\prod_{j=1}^{d}
\frac{(\vert\boldsymbol\gamma\vert-\gamma_j-1)!}
{\gamma_j!\,(\vert\boldsymbol\gamma\vert-2\gamma_j)!}\ ,
\end{align*}
we can rewrite this as
\begin{align*}
\coef{\mathbf x^{\boldsymbol \gamma}}g_0&=
\left({\binom {\vert\boldsymbol\gamma\vert-\gamma_0-1} { \gamma_0-1}}
+{\binom {\vert\boldsymbol\gamma\vert-\gamma_0-1} {
\gamma_0-2}}
\right)P\\
&\kern1cm
-\left({\binom {\vert\boldsymbol\gamma\vert-\gamma_0-1} { \gamma_0-1}}
+2{\binom {\vert\boldsymbol\gamma\vert-\gamma_0-1} {
\gamma_0-2}}
\right)P
\sum_{k=1}^{d}\frac{{\binom {\vert\boldsymbol\gamma\vert-\gamma_k-1} 
{ \gamma_k-1}}}
{{\binom {\vert\boldsymbol\gamma\vert-\gamma_k-1} { \gamma_k}}+2
{\binom {\vert\boldsymbol\gamma\vert-\gamma_k-1} { \gamma_k-1}}}\\
&=
{\binom {\vert\boldsymbol\gamma\vert-\gamma_0} { \gamma_0-1}}P
-\frac{\vert\boldsymbol\gamma\vert-1}{\vert\boldsymbol\gamma\vert-\gamma_0}
{\binom {\vert\boldsymbol\gamma\vert-\gamma_0} { \gamma_0-1}}P
\sum_{k=1}^{d}\frac{\gamma_k}{\vert\boldsymbol\gamma\vert}\\
&=\frac 1
{\vert\boldsymbol\gamma\vert}
{\binom {\vert\boldsymbol\gamma\vert-\gamma_0} { \gamma_0-1}}P\ .
\end{align*}
This shows that
\begin{equation} \label{eq:Res} 
\coef{\mathbf x^{\boldsymbol \gamma}}g_0=
\frac{\vert\boldsymbol\gamma\vert^{d-1}(\vert\boldsymbol\gamma\vert-\gamma_0)!}
{(\gamma_0-1)!\,(\vert\boldsymbol\gamma\vert+1-2\gamma_0)!}
\prod_{j=1}^{d}\frac{(\vert\boldsymbol\gamma\vert-\gamma_j-1)!}
{\gamma_j!\,(\vert\boldsymbol\gamma\vert-2\gamma_j)!}\ .
\end{equation}
Now we should remember that we actually wanted to compute the
coefficient of\break 
$x_0^{\gamma_0}x_1^{\gamma_1-1}x_2^{\gamma_2-1}\cdots x_d^{\gamma_d-1}$ in
the series $g_0(x_0,x_1,\dots,x_{d})$.
So, we have to replace $\gamma_i$ by $\gamma_i-1$ for $i=1,2,\dots,d$
and, thus,
$\vert\boldsymbol\gamma\vert$ by $s=-d+\sum_{j=0}^d\gamma_j$ in
\eqref{eq:Res}. If we do this, then we arrive easily 
at \eqref{formdimd}.
\end{proof}

\begin{prop} \label{propdetjac} 
Let $d$ be a non-negative integer and $J_{d+1}$ be the
$(d+1)\times(d+1)$ matrix
$$
\begin{pmatrix} 
\begin{cases} \frac{1+x_i}{\prod_{k=0}^{d}(1+x_k)}&i=j\\
-\frac{x_i(1+x_i)}{(1+x_j)\prod_{k=0}^{d}(1+x_k)}&i\ne j
\end{cases}
\end{pmatrix}_{0\le i,j\le d}\ .
$$
Then we have
\begin{equation} \label{eq:detJ} 
\det(J_{d+1})=
\bigg(1-\sum_{k=0}^{d}\frac {x_k} {1+2x_k}\bigg)
\prod_{j=0}^{d}\frac{1+2x_j}{(1+x_j)^{d+1}}\ .
\end{equation}
\end{prop}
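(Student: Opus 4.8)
The plan is to peel off the obvious common factors row by row, reducing the evaluation to a rank-one perturbation of a diagonal matrix, to which the matrix determinant lemma applies directly.

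First I would set $D=\prod_{k=0}^{d}(1+x_k)$ and factor $\frac{1+x_i}{D}$ out of the $i$-th row of $J_{d+1}$. The diagonal entry then becomes $1$, while the $(i,j)$-entry with $i\ne j$ becomes $-\frac{x_i}{1+x_j}$. Since $\prod_{i=0}^{d}\frac{1+x_i}{D}=\frac{D}{D^{d+1}}=\prod_{k=0}^{d}(1+x_k)^{-d}$, this yields
$$\det(J_{d+1})=\Big(\prod_{k=0}^{d}(1+x_k)^{-d}\Big)\det(M),$$
where $M$ is the $(d+1)\times(d+1)$ matrix with $M_{ii}=1$ and $M_{ij}=-\frac{x_i}{1+x_j}$ for $i\ne j$.

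The key observation is that $M$ can be written as a rank-one update of a diagonal matrix. Applying the off-diagonal rule $M_{ij}=-x_i/(1+x_j)$ also on the diagonal would contribute $-x_i/(1+x_i)$ there, so correcting this back to $M_{ii}=1$ gives
$$M=\operatorname{diag}\Big(\tfrac{1+2x_i}{1+x_i}\Big)_{0\le i\le d}-uv^{\mathsf T},\qquad u=(x_i)_{i},\quad v=\Big(\tfrac{1}{1+x_j}\Big)_{j}.$$
This is the only nonroutine point: one must notice that the off-diagonal entries factor as $u_i v_j$ with the row- and column-indices separated, which is exactly what makes the perturbation rank one. With this decomposition in hand the rest is mechanical.

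Finally I would invoke the matrix determinant lemma $\det(E-uv^{\mathsf T})=\det(E)\,(1-v^{\mathsf T}E^{-1}u)$ with $E=\operatorname{diag}\big((1+2x_i)/(1+x_i)\big)$. Here $\det(E)=\prod_{i=0}^{d}\frac{1+2x_i}{1+x_i}$, while
$$v^{\mathsf T}E^{-1}u=\sum_{i=0}^{d}\frac{1}{1+x_i}\cdot\frac{1+x_i}{1+2x_i}\cdot x_i=\sum_{i=0}^{d}\frac{x_i}{1+2x_i}.$$
Multiplying back the prefactor and using $\big(\prod_i\frac{1+2x_i}{1+x_i}\big)\prod_k(1+x_k)^{-d}=\prod_i\frac{1+2x_i}{(1+x_i)^{d+1}}$ reproduces exactly the right-hand side of \eqref{eq:detJ}. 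I expect no genuine obstacle beyond spotting the diagonal-plus-rank-one structure; once that is seen, no determinant of large size need ever be expanded.
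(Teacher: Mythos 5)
Your proof is correct, and it takes a genuinely different route from the paper. The paper factors out both row \emph{and} column terms to reduce $\det(J_{d+1})$ to $\prod_{j}(1+x_j)^{-(d+1)}$ times the determinant of the matrix with diagonal entries $1+x_i$ and off-diagonal entries $-x_i$; it then expands that determinant as the sum of all principal minors of the matrix with entries $\pm x_i$, evaluates each minor via the eigenvalue identity $\det(\lambda I_k-A_k)=\lambda^{k-1}(\lambda-k)$ for the all-ones matrix $A_k$ (at $\lambda=2$), and finally resums $\sum_k 2^{k-1}(2-k)e_k(x_0,\dots,x_d)$ using the generating function for elementary symmetric functions and its $t$-derivative, evaluated at $t=2$. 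You instead stop after the row factorisation, observe that the resulting matrix $M$ is a diagonal matrix minus the rank-one matrix $uv^{\mathsf T}$ with $u_i=x_i$, $v_j=1/(1+x_j)$, and finish with a single application of the matrix determinant lemma. Your route is shorter and avoids symmetric functions entirely; the paper's route is more self-contained, in effect re-deriving the rank-one determinant formula in the special case it needs (indeed, the paper's intermediate matrix is itself diagonal-plus-rank-one with $v$ the all-ones vector, so the same lemma would have worked there too). One small point worth making explicit in your write-up: the matrix determinant lemma requires $E$ invertible, which holds here because you may work over the field of rational functions in $x_0,\dots,x_d$ (equivalently, both sides of \eqref{eq:detJ} are rational functions, so it suffices to verify the identity generically).
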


\begin{proof}By factoring terms that only depend on the row index
or only on the column index, we see that
\begin{equation} \label{eq:J} 
\det(J_{d+1})=
\prod _{j=0} ^{d}\frac {1} {(1+x_j)^{d+1}}
\det\begin{pmatrix} 
\begin{cases} {1+x_i}&i=j\\
-{x_i}&i\ne j
\end{cases}
\end{pmatrix}_{0\le i,j\le d}\ .
\end{equation}
The above determinant equals the sum over all principal 
minors of the matrix
$$
\begin{pmatrix} 
\begin{cases} {x_i}&i=j\\
-{x_i}&i\ne j
\end{cases}
\end{pmatrix}_{0\le i,j\le d}\ ,
$$
where, as usual, a principal minor is by definition the determinant
of a submatrix with rows and columns indexed by a common subset of 
$\{0,1,\dots,d\}$.
Again factoring terms that only depend on the row index, we may
write the
principal minor corresponding to the submatrix 
indexed by $i_1,i_2,\dots,i_k$ in the form
\begin{equation} \label{eq:princ} 
x_{i_1}x_{i_2}\cdots x_{i_k}
\det\begin{pmatrix} 
\begin{cases} 1&i=j\\
-1&i\ne j
\end{cases}
\end{pmatrix}_{1\le i,j\le k}\ .
\end{equation}
The determinant in this expression occurs frequently.
In fact, we have
$$
\det(\lambda I_k-A_k)=\lambda^{k-1}(\lambda-k),
$$
where $I_k$ is the $k\times k$ identity matrix and 
$A_k$ the $k\times k$ all-$1$'s-matrix. 
(This is easily seen by observing that the matrix
$A_k$ has an eigenvector $(1,1,\dots,1)$ with eigenvalue $k$
and that the space orthogonal to $(1,1,\dots,1)$ is the kernel of $A_k$.)
By using this observation with $\lambda=2$, it follows that the
expression \eqref{eq:princ} simplifies to 
$$x_{i_1}x_{i_2}\cdots x_{i_k}2^{k-1}(2-k).
$$
If this is substituted in \eqref{eq:J}, we obtain
\begin{equation} \label{eq:J2} 
\det(J_{d+1})=\prod _{j=0} ^{d}\frac {1} {(1+x_j)^{d+1}}
\sum_{k=0}^{d+1}2^{k-1}(2-k)e_k(x_0,x_1,\dots,x_{d})\ ,
\end{equation}
where $e_k(x_0,x_1,\dots,x_{d})=\sum_{0\le i_1<\dots<i_k\le d}
x_{i_1}x_{i_2}\cdots x_{i_k}$ denotes the $k$-th elementary
symmetric function. As is well-known, these polynomials 
satisfy the generating function identity
\begin{equation}\label{formsym}
\sum_{k=0}^{d+1}e_k(x_0,x_1,\dots,x_{d})\,t^k=\prod_{j=0}^{d}(1+x_jt)\ .
\end{equation}
By differentiating this identity with respect to $t$, we obtain
the further equation
$$\sum_{k=1}^{d+1}k\,e_k(x_0,x_1,\dots,x_{d})\,t^{k-1}=
\sum_{k=0}^{d}\frac{x_k}{1+x_kt}\prod_{j=0}^{d}(1+x_jt)\ .$$
Using both with $t=2$ in \eqref{eq:J2}, we arrive exactly at
the right-hand side of \eqref{eq:detJ}.
\end{proof}


\section{Proof of Theorem~\ref{thmspec}}\label{sectspec}

We perform a reverse induction on $k$.
For the start of the induction, we remember that Theorem~\ref{thmspec}
is nothing but Theorem~\ref{thmgeneralcase} (which we established in
the previous section)
if $k=d$ and $\beta_0=\beta_1=\beta_2=\dots=\beta_d=1$.

For the induction step, we have to distinguish two cases.
Suppose first that $\beta_0=1$ and that Theorem~\ref{thmspec}
holds for all (suitable) sequences 
$\beta_0=1,\beta_1,\beta_2,\dots,\beta_{k+1}$.
Then Theorem~\ref{thmspec} holds
for $\beta_0=1,\beta_1+\beta_2,\beta_3,\dots,\beta_{k+1}$ 
if and only if
\begin{multline*}
s\sum_{k=\beta_1}^{\gamma-\beta_2}\frac{\beta_1}{\beta_1s+\beta_1-k}
{\binom {\beta_1s+\beta_1-k} { k-\beta_1}}
\frac{\beta_2}{\beta_2 s+\beta_2-(\gamma-k)}
{\binom {\beta_2 s+\beta_2-(\gamma-k)} { \gamma-k-\beta_2}}\\
=\frac{(\beta_1+\beta_2)} {(\beta_1+\beta_2)(s+1)-\gamma}
{\binom {(\beta_1+\beta_2)(s+1)-\gamma} { \gamma-\beta_1-\beta_2}}
\end{multline*}
for all $\gamma\geq \beta_1+\beta_2$.
(Without loss if generality, it suffices to consider the addition
of $\beta_1$ and $\beta_2$, since all other combinations lead to
analogous and equivalent statements.)
This is a special case of an identity commonly attributed to Rothe \cite{RothAA}
(to be precise, it is the case $\alpha\to\beta_1s$, $\beta\to-1$,
$\gamma\to\beta_2s+\beta_1+\beta_2$, $n\to\gamma-\beta_1-\beta_2$ of 
\cite[Eq.~(4)]{GOulAA};
see \cite{StreAC} for historical comments and more on this kind of identities,
although, for some reason, it misses \cite{CarlAP}), which establishes
the induction step in this case.

Suppose now that Theorem~\ref{thmspec}
holds for all (suitable) sequences $\beta_0,\beta_1,\beta_2,\dots,\beta_{k+1}$.
Then Theorem~\ref{thmspec} holds
for $\beta_0+\beta_1,\beta_2,\beta_3,\dots,\beta_{k+1}$ if and only if
\begin{multline*}
\sum_{k=\beta_0}^{\gamma-\beta_1}
\frac{\beta_1s}{\beta_0 s+\beta_0-k}
{\binom {\beta_0 s+\beta_0-k} { k-\beta_0}}
{\binom {\beta_1 s+\beta_1-1-(\gamma-k)} { \gamma-k-\beta_1}}\\
={\binom {(\beta_0+\beta_1)(s+1)-1-\gamma} { \gamma-\beta_0-\beta_1}}
\end{multline*}
for all $\gamma\geq \beta_0+\beta_1$.
(Again, without loss if generality, it suffices to consider the addition
of $\beta_0$ and $\beta_1$.)
This is a special case of another identity commonly attributed to Rothe \cite{RothAA}
(to be precise, it is the case $\alpha\to\beta_0s$, $\beta\to-1$,
$\gamma\to\beta_1s+\beta_0+\beta_1-1$, $n\to\gamma-\beta_0-\beta_1$ of 
\cite[Eq.~(11)]{GOulAA}), establishing the induction step
in this case also.\qed


\end{document}